\documentclass[final,12p,times,reqno]{amsart}

\usepackage[colorlinks=true,linkcolor=black, citecolor=blue, urlcolor=blue]{hyperref}
\usepackage{amsfonts,latexsym}
\usepackage{mdwlist}  
\usepackage{amssymb}
\usepackage{amsmath, times}

\author[N. ATTIA]
{ Najmeddine ATTIA }  

 \bigskip  \bigskip
\address{\newline Department of Mathematics, Faculty of
Sciences of Monastir, University of Monastir, 5000-Monastir,
Tunisia}

\email{najmeddine.attia@gmail.com}



\newcommand{\R}{\mathbb R}

\newcommand{\N}{\mathbb N}

\newcommand{\HH}{{\mathsf H}}

\newtheorem{theorem}{Theorem}
\newtheorem{lemma}{Lemma}
\newtheorem{proposition}{Proposition}
\newtheorem{corollary}{Corollary}
\newtheorem{definition}{Definition}
\newtheorem{remark}{Remark}

\usepackage{rotating}

\newcommand{\rle}{\rotatebox[origin=c]{90}{$\le$}}

 \numberwithin{equation}{section}

\title[  On the Hewitt Stromberg dimension of product sets]{ On the Hewitt Stromberg dimension of product sets}

\begin{document}

\maketitle

\begin{abstract}
 In this paper, we  construct    new multifractal measures, on the Euclidean space  $\R^n$,  in a similar manner to Hewitt-Stomberg meausres
  but using the class of all $n$-dimensional half-open binary cubes
  of covering sets in the definition rather than the class of all balls.
   As an application we shall be concerned with evaluation of Hewitt-Stromberg dimension of
    cartesian product sets by means of the dimensions of their components.\\

\bigskip

\noindent{Keywords}: Multifractal measures, Hewitt-Stromberg
measures, product sets.

\bigskip
\noindent{Mathematics Subject Classification:} 28A78, 28A80.
\end{abstract}

\maketitle

\section{Introduction }

Hewitt-Stromberg measures were introduced  in \cite[Exercise
(10.51)]{HeSt}. Since then, they  have  been investigated  by
several authors, highlighting their importance in the study of local
properties of fractals and products of fractals. One can cite, for
example \cite{Ha1, Ha2, Baek1, HHBaek, Olsen18}. In
particular, Edgar's textbook \cite[pp. 32-36]{Ed} provides an
excellent and systematic introduction to these  measures. Such
measures  also appears explicitly, for example, in  Pesin's
monograph \cite[5.3]{Pe} and implicitly in Mattila's text
\cite{Mat}. The reader can be referred  to  \cite{Olsen18,
Olsen19,attia19a, attia19b, Attia21a, Attia21b, Attia20, Attia23, Attia21c} for a class of generalization of these
measures). The aim of this paper is to construct a metric outer
measure $\HH^{*t}$ comparable with the Hewitt-Stromberg measure
$\HH^t$ (see Proposition \ref{comparaison}). In the construction of
these measures we use
 the class of all $n$-dimensional half-open binary cubes for covering
sets  rather than the class  of all balls (see
Section \ref{construction}). As an application, we discuss and prove
in Section \ref{application} the relationship between Hewitt-Stromberg
dimension of cartesian product sets and  the dimensions of their
components. We obtain in particular,
$$
\dim_{MB} (A\times B) \ge  \dim_{MB}  A +  \dim_{MB} B,
$$
for a class of subsets of $\R$, where $\dim_{MB} $ denote the Hewitt-Stomberg dimension. Various
results on this problem have been obtained for Hausdorff and packing
dimension (see for example \cite{Bes45, Marstrand54,Ohtsuka57,Xiao96,howroyd96,wei16, Attia21, Attia22b, Attia20, Attia22c}).
We give in the end of  section  \ref{application} a sufficient condition to get
the equality in the previous equation (Theorem \ref{egalite}). In
the Section \ref{example} we construct two sets $A$ and $B$ such
that $\dim_{MB} (A\times B) \neq  \dim_{MB} A + \dim_{MB} B.$ Which
proves that the last  inequality can be strict.

 \section{Preliminary}
 First we recall briefly the definitions of Hausdorff dimension,
 packing  dimension and Hewitt-Stromberg dimension and the relationship linking these three
 notions.  Let  ${\mathcal F} $ be the class of dimension functions, i.e.,
 the  functions $h : \R_+^* \to \R_+^*$   which are right continuous, monotone increasing  with $\lim_{r\to 0}h(0)=0$.

Suppose that,  for $n\ge 1$, $\R^n$  is  endowed with the Euclidean distance.
 For  $E\subset \R^n$, $h\in {\mathcal F}$ and $\varepsilon>0$,  we write
$$
\mathcal{H}_\varepsilon^h(E)= \inf\left\{\sum_i
h\Big(|E_i|\Big)\;\; E\subseteq\bigcup_i
E_i,\;\; |E_i|<\varepsilon \right\},
$$
where $|A|$ is the diameter of the set $A $ defined as
$|A| = \sup\big\{ |x - y|, \; x, y \in A \big\}$. This
allows to define the  Hausdorff measure, with respect to $h$,
 of $E$ by
$$
\mathcal{H}^h(E)=\sup_{\varepsilon>0}\mathcal{H}_\varepsilon^h(E).
$$
The reader can be  referred to Rogers' classical text \cite{Rogers} for a systematic discussion of $\mathcal{H}^h$.

We define, for $\varepsilon> 0$, 
\begin{eqnarray*}
\overline{\mathcal{P}}_\varepsilon^h(E)= \sup\left\{\sum_i
h\Big(2r_i\Big)\right\},
\end{eqnarray*}
where the supremum is taken over all closed balls $\Big( B(x_i, r_i)
\Big)_i \; \text{such that}\; r_i \leq \varepsilon$,   $x_i \in E \; \text{and}\; |x_i -  x_j|\geq \frac{r_i+r_j}2\;
\text{for}\; i\neq j$. The $h$-dimensional packing premeasure, with respect to $h$,
 of $E$ is now defined by
$$
\overline{\mathcal{P}}^h(E)=\sup_{\varepsilon>0}\overline{\mathcal{P}}_\varepsilon^h(E).
$$
This makes us able to define the  packing measure, with respect to $h$,
 of $E$ as
$$
{\mathcal{P}}^h(E)=\inf\left\{\sum_i\overline{\mathcal{P}}^h(E_i)\;\Big|\;\;E\subseteq\bigcup_i
E_i\right\}.
$$

While Hausdorff and packing measures are defined using coverings and
packings by families of sets with diameters less than a given
positive number $\varepsilon$, the Hewitt-Stromberg measures are
defined using covering of balls with the same diameter
$\varepsilon$. The Hewitt-Stromberg premeasure
$\overline{\mathsf{H}}^h$ is defined by
$$
\overline{\mathsf{H}}^h(E)=\liminf_{r\to0}
\overline{\mathsf{H}}^h_{r} \quad \text{where} \quad
\overline{\mathsf{H}}^h_r (E) =  N_r(E) \;h(2r)
$$
and the  covering number $N_r(E)$ of $E$ is  defined by
\begin{eqnarray*}
N_r(E)=\inf\Big\{\sharp\{I\}\;&\Big |&\; \Big( B(x_i, r) \Big)_{i\in
I} \;  \text{is a family of closed balls } \\
&& \text{ with}\; x_i \in E \; \text{and}\; E\subseteq \bigcup _i
B(x_i, r)\Big\}.
\end{eqnarray*}
Now, we define  the    Hewitt-Stromberg
measure, with respect to $h$,   which we denote   by $\mathcal{\mathsf H}^h$, as follows
$$
{\mathsf{H}}^h(E)=\inf\left\{\sum_i\overline{\mathsf{H}}^h(E_i)\;\Big|\;\;E\subseteq\bigcup_i
E_i\right\}.
$$
\begin{remark}
In a similar manner  to Hausdorff and packing measures, for $E\subseteq \R^n$ and $t\ge 0$, we have
$$\overline{\HH}^t(E) = \overline{\HH}^t(\overline{E}),$$
where $\overline{E}$ is the closure of $E$.
\end{remark}
We recall the basic inequalities satisfied by the Hewitt-Stromberg,
the Hausdorff  and the packing measures (see \cite[Proposition
2.1]{Olsen18})

$$\begin{array}{ccccccc}
 & & \overline{\mathsf{H}}^h(E) & \le & \overline{\mathcal{P}}^h(E)\\
& & \rle  & & \rle\\
{\mathcal{H}}^h(E) & \leq & {\mathsf{H}}^h(E) & \leq & {\mathcal{P}}^h(E).
\end{array}
$$
 Let $t>0$ and $h_t$ is the dimension function defined by $$h_t(r) =
r^t.$$
 In this case we will denote simply ${\mathcal H}^{h_t}$ by
${\mathcal H}^t$, also ${\mathcal P}^{h_t}$ will be denoted by $
{\mathcal P}^t$, $\overline{\HH}^{h_t}$ will be denoted by $\overline{\HH}^t$ and $\HH^{h_t}$ will be denoted by $\HH^t$.  Now we
define the Hausdorff dimension, the packing dimension and the
Hewitt-Stromberg dimension of a set $E$ respectively by
$$
\dim_H E =\sup \left\{ t \ge 0, \;\; {\mathcal H}^t (E)= +\infty
\right\} = \inf \left\{ t \ge 0, \;\; {\mathcal H}^t (E)= 0
\right\},$$
$$ \dim_P E =\sup \left\{ t \ge 0, \;\; {\mathcal P }^t (E)= +\infty,
\right\} =\inf \left\{ t \ge 0, \;\; {\mathcal P }^t (E)= 0\right\}
$$
and
$$ \dim_{MB} E =\sup \left\{ t \ge 0, \;\; {\HH}^t (E)= +\infty
\right\} = \inf \left\{ t \ge 0, \;\; {\HH}^t (E)= 0 \right\}.$$
It follows, for any set $E$, that
 $$ \dim_H (E) \le \dim_{MB} (E) \le \dim_P (E). $$
\begin{definition}
Let $\xi >0$. A set $E$ is said to be $\xi$-regular if, for any $t \ge 0$, we have
$$
\overline{\HH}^t(E) =\xi \HH^t(E).
$$
That is,   $E$ is $\xi$-regular if  ${\dim}_{\overline {MB}} E = {\dim}_{MB} E =\alpha$ and $\overline{\HH}^\alpha(E)  =\xi \HH^\alpha(E)$, where
$${\dim}_{\overline {MB}} E=
\sup \left\{ t \ge 0, \;\; \overline{\HH}^t (E)= +\infty \right\} = \inf \left\{ t \ge 0, \;\; \overline{\HH}^t(E) = 0 \right\}.$$
\end{definition}

 We  finish this section by two lemmas which
will be useful in the following.
\begin{lemma}\label{ball}
Let $B$ is a ball in $\R^n$ of diameter $\delta >0$. The
number of balls of diameter $\gamma \in (0, \delta)$ necessary to
cover $B $ is less then
$$
b_n := \Big[\frac{\delta}{\gamma} \sqrt{n} \Big]^n.
$$
\end{lemma}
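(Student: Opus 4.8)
The plan is to reduce the problem of covering the ball $B$ to the much simpler problem of tiling its circumscribing cube by small cubes, each of which is in turn inscribed in a single ball of diameter $\gamma$. After translating we may assume $B$ is centred at the origin, so that $B$ has radius $\delta/2$; then every point of $B$ has each coordinate at most $\delta/2$ in absolute value, whence $B \subseteq Q := [-\delta/2,\delta/2]^n$, the axis-parallel cube of side length $\delta$. Thus it suffices to cover $Q$.

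Next I would fix the side length $\ell := \gamma/\sqrt{n}$ for the tiling cubes. A cube of side $\ell$ in $\R^n$ has main diagonal of length $\ell\sqrt{n} = \gamma$, so it is inscribed in, and in particular covered by, a single closed ball of diameter $\gamma$ (namely the ball circumscribing the cube, of radius $\gamma/2$). Consequently, any covering of $Q$ by cubes of side $\ell$ yields, upon replacing each cube by its circumscribing ball, a covering of $B$ by balls of diameter $\gamma$; so it remains only to count how many cubes of side $\ell$ are needed to cover $Q$.

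Finally, cover $Q$ by a grid of cubes of side $\ell$. Along each of the $n$ coordinate directions the interval of length $\delta$ is covered by $\lceil \delta/\ell\rceil = \lceil (\delta/\gamma)\sqrt{n}\rceil$ consecutive intervals of length $\ell$, so the product grid uses at most $\lceil (\delta/\gamma)\sqrt{n}\rceil^{\,n}$ cubes, and hence at most that many balls; this is the quantity $b_n$. The one delicate point, which I would treat with care, is exactly this integer-part bookkeeping: one must confirm that the per-direction count matches the definition of $b_n$ and, for the strict inequality, that the extremal grid cubes may be omitted — since for $n\ge 2$ the inscribed ball $B$ does not meet the corner cubes of the circumscribing cube $Q$, one can discard those and obtain strictly fewer balls than $b_n$. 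Everything else is an elementary chain of containments of convex bodies, so the counting (floor versus ceiling and the resulting strictness) is the only genuine obstacle.
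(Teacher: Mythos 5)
Your proposal is correct and follows essentially the same route as the paper: inscribe $B$ in its circumscribing cube of side $\delta$, tile that cube by a grid of cubes of side $\gamma/\sqrt{n}$ (each having diagonal $\gamma$ and hence contained in a single ball of diameter $\gamma$), and count the grid cubes per coordinate direction. Your extra attention to the floor-versus-ceiling bookkeeping is a point the paper's own proof silently glosses over, but it does not change the method.
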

\begin{proof}
Consider a ball $B$ of diameter $\delta$. $B$ can be inscribed in a cube of side length $\delta$. In the other hand the largest cube that can be inscribed in a ball of diameter $\gamma$ has diameter $\gamma$ and therefore has side $\displaystyle\frac{\gamma}{\sqrt n}$. Thus, we need $$\frac{\delta}{\gamma} \sqrt n$$ edges of the smaller cubes to completely cover an edge of the largest cube, and hence we would need $b_n$ of the smaller cubes to cover the largest cube, thereby also covering the ball of diameter $\delta$. Since each ball of diameter $\gamma$ contains one of these smaller cubes, we can therefore use this number of balls to cover the ball of diameter $\delta$.
\end{proof}
\begin{remark}\label{covering} As a direct application of Lemma \ref{ball}, if $k$ is an integer, any cube of side $2^{-k}$  is contained in $(2n)^n$ balls of diameter $2^{-k-1}$.
\end{remark}
\begin{lemma}\label{2r}
Let $\{E_n\} $ be a decreasing sequence of compact subsets of\;
$\R^n$ and $F = \bigcap_n E_n$. Then, for any $\delta >0$, $t\ge 0$ and $\gamma>1$,
$$
\lim_{n\to +\infty}\overline{\HH}^t_{\gamma\delta} (E_n) \le \gamma^t \overline{\HH}^t_\delta(F).
$$
\end{lemma}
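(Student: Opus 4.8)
The plan is to reduce the stated inequality to a purely combinatorial statement about covering numbers and then to establish that statement by a compactness argument. Recalling that $\overline{\HH}^t_r(E) = N_r(E)\,(2r)^t$ for $h_t(r)=r^t$, and observing the elementary identity $\gamma^t(2\delta)^t = (2\gamma\delta)^t$, the desired inequality $\overline{\HH}^t_{\gamma\delta}(E_n) \le \gamma^t\,\overline{\HH}^t_\delta(F)$ becomes, after cancelling the common factor $(2\gamma\delta)^t$, equivalent to
$$
N_{\gamma\delta}(E_n) \le N_\delta(F)
$$
holding for all sufficiently large $n$. So I would first record this algebraic reduction and thereafter work only with the covering numbers; note that the factor $\gamma^t$ precisely absorbs the enlargement of the diameter from $\delta$ to $\gamma\delta$.

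Next I would produce a good cover of $F$ and enlarge it. Since $F=\bigcap_n E_n$ is a closed subset of the compact set $E_1$, it is compact and the number $m:=N_\delta(F)$ is finite; fix an optimal family of closed balls $B(x_1,\delta),\dots,B(x_m,\delta)$ with centers $x_i\in F$ and $F\subseteq\bigcup_{i=1}^m B(x_i,\delta)$. Because $\gamma>1$, each closed ball $B(x_i,\delta)$ is contained in the \emph{open} ball $B^\circ(x_i,\gamma\delta)$, so the set $U:=\bigcup_{i=1}^m B^\circ(x_i,\gamma\delta)$ is an open neighbourhood of $F$.

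The key step is then a finite-intersection argument. The sets $K_n:=E_n\setminus U$ are compact (closed subsets of the compact $E_n$) and decrease with $n$, and $\bigcap_n K_n = F\setminus U = \emptyset$ since $F\subseteq U$. By Cantor's intersection theorem some $K_N$ is already empty, that is $E_N\subseteq U$, and the monotonicity of $\{E_n\}$ gives $E_n\subseteq E_N\subseteq U\subseteq\bigcup_{i=1}^m B(x_i,\gamma\delta)$ for every $n\ge N$. As the centers satisfy $x_i\in F\subseteq E_n$, these $\gamma\delta$-balls form an admissible family in the definition of $N_{\gamma\delta}(E_n)$, whence $N_{\gamma\delta}(E_n)\le m = N_\delta(F)$ for all $n\ge N$. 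Multiplying by $(2\gamma\delta)^t$ and letting $n\to\infty$ (the inequality holding for all $n\ge N$) yields the claim.

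I expect the main obstacle to be the passage from the closed $\delta$-cover of $F$ to a cover of the sets $E_n$: the hypothesis $\gamma>1$ is exactly what manufactures the open neighbourhood $U$ of $F$ on which the finite-intersection property can be invoked, and one must verify that the enlarged balls remain centered in $E_n$ so as to be admissible for $N_{\gamma\delta}(E_n)$. The degenerate case $F=\emptyset$ (so $N_\delta(F)=0$) is covered by the same argument with $U=\emptyset$, which forces $E_n=\emptyset$, and hence $N_{\gamma\delta}(E_n)=0$, for all large $n$.
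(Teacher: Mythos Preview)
Your proof is correct and follows essentially the same route as the paper's: take a $\delta$-cover of $F$ with centers in $F$, enlarge the radii to $\gamma\delta$, and invoke the finite-intersection property of compact sets to conclude that $E_n\subseteq U$ for all large $n$, whence $N_{\gamma\delta}(E_n)\le N_\delta(F)$. Your write-up is in fact more careful than the paper's---you use \emph{open} enlarged balls so that $E_n\setminus U$ is genuinely compact, and you verify that the centers remain admissible for $N_{\gamma\delta}(E_n)$---but these are refinements of the same argument, not a different one.
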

\begin{proof}
Let $\Big\{B_i = B(x_i, \delta )\Big\}$ be  any covering of $F$. We claim that there exists $n$ such that
 $E_n \subset U =\bigcup_i B(x_i, \gamma\delta)$. Indeed, otherwise, $\Big\{ E_n \backslash U \Big\}$ is a
 decreasing sequence of non-empty compact sets, which, by an elementary consequence of compactness,
 has a non-empty limit set $(\lim E_n )\backslash U$. Then, for $t\ge 0$,
$$
\lim_{n\to +\infty} N_{\gamma\delta} (E_n) (2\gamma\delta)^t\le \gamma^t N_{\delta}(F) (2\delta)^t.
$$
\end{proof}



\section{Relation between $\HH^t $ and $\overline{\HH}^t$}
 We  can see, from the definition,  that estimating the Hewitt-Stromberg premeasure  is much easier than estimating the Hewitt-Sttromberg measure. It is therefore natural to look
for relationships between these two quantities. The reader can also see \cite{Joy95, feng99, Wen07, attia18} for a similar result for Hausdorff and packing measures.

 \begin{lemma}
 Let $K$ be compact set in $\R^n$ and $t\ge 0$. Suppose that for   every $\epsilon >0$ and  subset $E$ of $K$ one can find an open set $U$ such that $E\subset U$ and  $\overline{\HH}^t(U \cap K) \le \overline{\HH}^t(E)+\epsilon$, then
$$\HH^t(K) = \overline{\HH}^t(K).$$
 \end{lemma}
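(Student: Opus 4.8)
The plan is to prove the equality by the two opposite inequalities. The inequality $\HH^t(K)\le\overline{\HH}^t(K)$ is immediate and needs neither the hypothesis nor compactness: since $\{K\}$ is itself a one-set cover of $K$, the infimum defining $\HH^t(K)=\inf\{\sum_i\overline{\HH}^t(E_i):\,K\subseteq\bigcup_iE_i\}$ is at most $\overline{\HH}^t(K)$. Hence the whole content of the lemma is the reverse inequality $\overline{\HH}^t(K)\le\HH^t(K)$, and this is where the hypothesis and the compactness of $K$ must be used together.

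For the reverse inequality I fix $\epsilon>0$ and choose a cover $\{E_i\}$ of $K$ that is nearly optimal for the infimum, say $\sum_i\overline{\HH}^t(E_i)\le\HH^t(K)+\epsilon$. Intersecting with $K$ and using monotonicity of the premeasure, $\overline{\HH}^t(E_i\cap K)\le\overline{\HH}^t(E_i)$, so I may work with the subsets $E_i\cap K$ of $K$, which still cover $K$. Applying the hypothesis to each $E_i\cap K$ with tolerance $\epsilon 2^{-i}$ produces open sets $U_i\supseteq E_i\cap K$ with $\overline{\HH}^t(U_i\cap K)\le\overline{\HH}^t(E_i\cap K)+\epsilon 2^{-i}$, whence $\sum_i\overline{\HH}^t(U_i\cap K)\le\HH^t(K)+2\epsilon$. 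The sets $U_i$ form an open cover of the compact set $K$, so I extract a finite subcover and obtain $K=\bigcup_{i=1}^m(U_i\cap K)$ for some $m$.

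It then remains to show $\overline{\HH}^t(K)\le\sum_{i=1}^m\overline{\HH}^t(U_i\cap K)$, and I expect this finite subadditivity on the pieces to be the main obstacle. At each fixed scale $r$ the covering numbers are subadditive, $N_r(K)\le\sum_{i=1}^mN_r(\overline{U_i\cap K})$, and by the closure-invariance of the premeasure $\overline{\HH}^t(\overline{U_i\cap K})=\overline{\HH}^t(U_i\cap K)$; the difficulty is that $\overline{\HH}^t$ is a $\liminf$ in $r$ and is therefore not subadditive for an arbitrary finite family (distinct sets may attain their $\liminf$ along different scale sequences), so one cannot merely take $\liminf_{r\to 0}$ of the fixed-scale inequality. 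To get around this I would evaluate $N_r(K)$ along a sequence $\delta\to 0$ realising $\overline{\HH}^t(K)$ and invoke Lemma \ref{2r}, comparing the scale-$\gamma\delta$ covering numbers of the compact pieces $\overline{U_i\cap K}$ to their scale-$\delta$ values at the cost of a factor $\gamma^t$, and then let $\gamma\downarrow 1$; it is precisely here that the global regularity furnished by the hypothesis, rather than a naive subadditivity, must be brought to bear to control the relevant scales. Granting this step, $\overline{\HH}^t(K)\le\sum_{i=1}^m\overline{\HH}^t(U_i\cap K)\le\HH^t(K)+2\epsilon$, and letting $\epsilon\to 0$ finishes the proof.
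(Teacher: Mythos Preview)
Your argument follows the paper's proof step for step: take an arbitrary cover $\{E_i\}$ of $K$, apply the hypothesis with tolerance $\epsilon\,2^{-i}$ to obtain open sets $U_i$, use compactness of $K$ to pass to a finite subcover, and then bound $\overline{\HH}^t(K)$ by the finite sum $\sum_{i=1}^m\overline{\HH}^t(U_i\cap K)$. The paper does exactly this, and in fact is slightly less careful than you in one place: it applies the hypothesis directly to the $E_i$ without first intersecting with $K$, even though the hypothesis is stated only for subsets of $K$.

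Where you diverge is at the finite-subadditivity step. The paper does not share your worry: it simply writes ``we may use the fact that, for all $F_1,F_2\subset\R^n$, $\overline{\HH}^t(F_1\cup F_2)\le\overline{\HH}^t(F_1)+\overline{\HH}^t(F_2)$'' and concludes at once. Your observation that this is not automatic for a $\liminf$ is legitimate---the paper offers no justification for this ``fact.'' On the other hand, your own proposed repair via Lemma~\ref{2r} is only a sketch and does not obviously close the gap either: that lemma compares the premeasure of a \emph{decreasing} sequence of compacts to that of their intersection at slightly different scales, which is not the situation here (a fixed finite union evaluated along a single scale sequence). So on this one step your proof and the paper's are in the same position: the paper asserts subadditivity outright, while you flag it and gesture at a fix that is not carried through.
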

 \begin{proof}
 Let $\epsilon >0$ and let $\{E_i\}$ be a sequence of sets  such that    $K\subseteq \bigcup_i E_i$. 
 Take, for each $i$, a set $U_i$ such that  $E_i\subset U_i$ and  $$\overline{\HH}^t(U_i\cap K) \le \overline{\HH}^t(E_i)+ 2^{-i-1}\epsilon.$$
  Since $K$  is compact, the cover $\{U_i \}$ of $K$ has a finite
subcover. So we may use the fact that, for all $F_1, F_2\subset
\R^n$, 
 $$
 \overline{\HH}^t (F_1\cup F_2)\le \overline{\HH}^t (F_2) \cup\overline{\HH}^t (F_2)
 $$
 to infer that
 $$
 \overline{\HH}^t(K) \le \sum_{i} \overline{\HH}^t(U_i\cap K) \le \sum_i (\overline{\HH}^t(E_i)+ 2^{-i-1}\epsilon) \le  \sum_i \overline{\HH}^t(E_i) + \epsilon.
 $$
 This is true for all $\epsilon >0$ and $\{E_i\}$ such that $K\subseteq \bigcup_i E_i$. Thus $$
\HH^t(K) \ge \overline{\HH}^t(K).$$
The opposite inequality is obvious.
 \end{proof}

\begin{theorem}
Let $K\subset \R^n$ be a compact set and $ t\ge 0$  such that
$\overline{\HH}^t(K) <+\infty. $ Then for
 any subset $F$ of $K$ and any $\epsilon >0$ there exists an open set $U$ such that $F\subset U$ and
$$
\overline{\HH}^t(U \cap K) < \overline{\HH}^t(F) + \epsilon.
$$
\end{theorem}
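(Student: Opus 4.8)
The plan is to reduce to the case where $F$ is compact and then obtain $U$ from an economical cover of $F$, using Lemma \ref{2r} as the engine that turns a near optimal cover of $F$ into a cover of a whole neighbourhood. First I would replace $F$ by its closure $\overline F$: since $F\subset K$ with $K$ compact, $\overline F$ is a compact subset of $K$, every open set containing $\overline F$ contains $F$, and by the remark that $\overline{\HH}^t(E)=\overline{\HH}^t(\overline E)$ we have $\overline{\HH}^t(\overline F)=\overline{\HH}^t(F)$. Hence it suffices to produce $U\supset \overline F$ with $\overline{\HH}^t(U\cap K)<\overline{\HH}^t(\overline F)+\epsilon$, and we may assume from now on that $F$ is compact. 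Writing $H:=\overline{\HH}^t(F)$, which is finite because $H\le \overline{\HH}^t(K)<+\infty$, I would fix $\gamma>1$ so close to $1$ that $(\gamma^t-1)H<\epsilon/2$.

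Next I set up the decreasing family of compact neighbourhoods $E_m=\{x\in K:\ d(x,F)\le 1/m\}$, so that $E_m\downarrow F=\bigcap_m E_m$, together with the open neighbourhoods $U_m=\{x:\ d(x,F)<1/m\}\supset F$, which satisfy $U_m\cap K\subset E_m$ and hence $\overline{\HH}^t(U_m\cap K)\le \overline{\HH}^t(E_m)$ by monotonicity. To feed Lemma \ref{2r}, I choose a sequence $\delta_k\downarrow 0$ realising the lower limit defining $H$, i.e.\ $\overline{\HH}^t_{\delta_k}(F)\to H$, and for each $k$ a near optimal cover of $F$ by $N_{\delta_k}(F)$ closed balls of radius $\delta_k$ centred in $F$. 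Enlarging the radii from $\delta_k$ to $\gamma\delta_k$, the same centres cover $\{x:\ d(x,F)<(\gamma-1)\delta_k\}$, hence $E_m$ as soon as $1/m<(\gamma-1)\delta_k$; this is exactly the compactness step in the proof of Lemma \ref{2r}, and it gives the single scale estimate
\[
\overline{\HH}^t_{\gamma\delta_k}(E_m)=N_{\gamma\delta_k}(E_m)\,(2\gamma\delta_k)^t\le \gamma^t\,\overline{\HH}^t_{\delta_k}(F).
\]

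The delicate point, and the step I expect to be the main obstacle, is to pass from these isolated scale bounds to the lower limit defining $\overline{\HH}^t(U\cap K)$: an estimate at the single scale $\gamma\delta_k$ does not by itself control $\liminf_{r\to 0}N_r(\cdot)(2r)^t$, because at scales much finer than $\delta_k$ a fixed neighbourhood of $F$ is ``fat'' and can be expensive to cover. I would resolve this by a diagonal construction. Passing to a rapidly decreasing subsequence of the $\delta_k$ (so that the radii $(\gamma-1)\delta_k$ and the scales $\gamma\delta_k$ are well separated), I glue the enlarged covers together along disjoint distance annuli about $F$ to build one open set $U\supset F$ for which, at each scale $\gamma\delta_k$, the set $U\cap K$ is covered by the $N_{\delta_k}(F)$ balls attached to that annulus plus a remainder supported on the coarser annuli. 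It is precisely here that the hypothesis $\overline{\HH}^t(K)<+\infty$ must be used: it is what lets one bound the remainder and force its contribution to $0$ along the chosen subsequence, so that
\[
\overline{\HH}^t(U\cap K)=\liminf_{r\to0}N_r(U\cap K)(2r)^t\le \liminf_{k\to\infty}\overline{\HH}^t_{\gamma\delta_k}(U\cap K)\le \gamma^t H.
\]
Combining with the choice of $\gamma$ yields $\overline{\HH}^t(U\cap K)\le \gamma^t H=H+(\gamma^t-1)H<H+\epsilon/2<\overline{\HH}^t(F)+\epsilon$, as required. The only genuinely technical work is the annular gluing and the finiteness estimate for the remainder; the reduction to compact $F$ and the enlargement step are already encapsulated in Lemma \ref{2r}.
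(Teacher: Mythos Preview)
Your reduction to compact $F$, the introduction of the parallel bodies $E_m=\{x\in K:d(x,F)\le 1/m\}$ and the open $U_m$, and the appeal to Lemma~\ref{2r} are exactly the ingredients of the paper's proof. The divergence is in what happens next. The paper does \emph{not} build any glued or diagonal set: it simply takes $U=F_n=\{x:d(x,F)<1/n\}$ for a suitably large $n$, and asserts as a direct consequence of Lemma~\ref{2r} (applied to the decreasing compacta $\overline{F}_n\cap K$ with intersection $F$) that some $n$ satisfies
\[
\overline{\HH}^t(\overline{F}_n\cap K)\le \gamma^t\,\overline{\HH}^t(F),
\]
then chooses $\gamma>1$ close enough to $1$ that $\gamma^t\overline{\HH}^t(F)\le\overline{\HH}^t(F)+\epsilon$. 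That is the entire argument.

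You are right that Lemma~\ref{2r}, as stated, yields only the single-scale inequality $\lim_n\overline{\HH}^t_{\gamma\delta}(E_n)\le\gamma^t\overline{\HH}^t_\delta(F)$ for each fixed $\delta$, and that passing from this to a bound on the full premeasure $\overline{\HH}^t(\overline{F}_n\cap K)=\liminf_{r\to0}\overline{\HH}^t_r(\overline{F}_n\cap K)$ for one fixed $n$ is not automatic, since the $n$ furnished by the lemma depends on $\delta$. The paper simply does not address this point; it treats the premeasure-level inequality as immediate. Your annular-gluing scheme is therefore an attempt to fill a step the paper skips, not an alternative route. That said, your sketch of the fix is itself incomplete: bounding the coarse-annulus remainders at scale $\gamma\delta_k$ using only $\overline{\HH}^t(K)<\infty$ is delicate because that hypothesis is again a $\liminf$ and gives no control at arbitrary scales, so the claim that the remainder tends to $0$ along your subsequence needs a genuine argument. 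In summary, your proposal coincides with the paper's proof up to and including the single-scale estimate; the paper then stops and takes $U=F_n$, whereas you embark on a considerably more elaborate construction to justify a transition the paper leaves implicit.
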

\begin{proof}
Since $F$ has the same Hewitt-Stromberg premeasure as its closure we
can assume that $F$ is  a compact set. For $n\ge 1$, define the
$n$-parallel body $F_n$ of $F$ by $$ F_n = \Big\{ x \in \R^n, \quad
|x-y| < 1/n, \;\; \text{for some} \; y\in F\Big\}.
$$
It is  clear that $F_n$ is  an open set  and $F\subset F_n$, for all $n$. Denote by $\overline{F}_n$ the closure of $F_n$ and let $\gamma >1.$ Using Lemma \ref{2r}, there exists $n$ such that
$$
\overline{\HH}^t(\overline{F}_n \cap K) \le \gamma^t
\overline{\HH}^t(F)
$$
For $\epsilon >0$, we can choose $\gamma $ such that $\gamma^t
\overline{\HH}^t(F) \le \overline{\HH}^t(F)  + \epsilon$. Finally,
we get
$$
\overline{\HH}^t(F_n\cap K) \le  \overline{\HH}^t(\overline{F}_n
\cap K)  \le   \overline{\HH}^t(F)  + \epsilon.
$$
\end{proof}
As a  direct consequence, we get the following results.
\begin{theorem}\label{egaH}
Let $K\subset \R^n$ be a compact set and $ t\ge 0$. If \;
$\overline{\HH}^t(K) <+\infty$  then
 $$\overline{\HH}^t(K)= {\HH}^t(K).$$
\end{theorem}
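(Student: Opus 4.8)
The plan is to obtain Theorem \ref{egaH} as an immediate corollary of the two results that directly precede it, rather than to re-derive anything from the definitions of $\overline{\HH}^t$ and $\HH^t$. The opening Lemma of this section has already reduced the identity $\HH^t(K)=\overline{\HH}^t(K)$ to a single approximation property of the compact set $K$: namely, that every subset $E\subset K$ can be trapped inside an open set $U$ whose trace satisfies $\overline{\HH}^t(U\cap K)\le \overline{\HH}^t(E)+\epsilon$ for arbitrary $\epsilon>0$. The preceding Theorem supplies exactly this property, under the sole extra assumption that $\overline{\HH}^t(K)<+\infty$ — which is precisely the standing hypothesis of Theorem \ref{egaH}. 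So the whole argument is a matter of checking that the conclusion of one statement is the hypothesis of the other.

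First I would invoke the finiteness assumption $\overline{\HH}^t(K)<+\infty$ to activate the preceding Theorem. Fixing a subset $F\subset K$ and an $\epsilon>0$, that Theorem produces an open set $U\supset F$ with $\overline{\HH}^t(U\cap K)<\overline{\HH}^t(F)+\epsilon$. Since a strict inequality trivially yields $\overline{\HH}^t(U\cap K)\le \overline{\HH}^t(F)+\epsilon$, the approximation hypothesis required by the Lemma holds verbatim, and it holds simultaneously for every $F$ and every $\epsilon$. Feeding this into the Lemma then delivers $\HH^t(K)=\overline{\HH}^t(K)$, which is the assertion.

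There is essentially no obstacle remaining at this stage, since the genuine content has already been discharged: the Lemma establishes the nontrivial inequality $\HH^t(K)\ge \overline{\HH}^t(K)$ through a compactness/finite-subcover argument (the reverse inequality $\HH^t(K)\le \overline{\HH}^t(K)$ being automatic from the covering infimum defining $\HH^t$, by taking the trivial one-set cover), while the Theorem realizes the approximation via the parallel bodies $F_n$ together with the continuity estimate of Lemma \ref{2r}. The only point meriting a moment's care is the matching of quantifiers — one must observe that the Theorem yields the desired open set for \emph{every} subset $F$ and \emph{every} $\epsilon$, so that it fits the universally quantified hypothesis of the Lemma — but this is immediate. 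Hence the proof amounts to citing the two preceding statements and noting that one feeds directly into the other.
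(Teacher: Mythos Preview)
Your proposal is correct and matches the paper's approach exactly: the paper itself presents Theorem \ref{egaH} as a direct consequence of the preceding Lemma and Theorem, with no separate proof given. You have simply spelled out explicitly how the conclusion of the Theorem feeds into the hypothesis of the Lemma, which is precisely the intended argument.
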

From Theorem  \ref{egaH}, we immediately obtain the following corollary.
\begin{corollary} Let $E\subset \R^n$ and $t\ge 0$
\begin{enumerate}
\item Assume that $0 < \overline{\HH}^t(E) < +\infty$. Then $0 < {\HH}^t (\overline{E})< \infty$.
\item Assume that $E$  is compact and $t > \dim_{MB} E$. Then either $\overline{\HH}^t (E) = 0$
 or $\overline{\HH}^t(E) =+\infty$.\\
\end{enumerate}
\end{corollary}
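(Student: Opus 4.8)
The plan is to derive both parts directly from Theorem \ref{egaH}, which tells us that whenever $\overline{\HH}^t(K)<+\infty$ on a compact set we have $\overline{\HH}^t(K)=\HH^t(K)$. Throughout I will use the elementary fact, already noted in the Remark, that the premeasure is closure-invariant, so $\overline{\HH}^t(E)=\overline{\HH}^t(\overline{E})$, and the chain of inequalities $\overline{\HH}^t\le\overline{\mathcal P}^t$ together with $\HH^t\le\mathcal P^t$ displayed earlier.

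For part (1), assume $0<\overline{\HH}^t(E)<+\infty$. First I would pass to the closure: since $\overline{\HH}^t(\overline{E})=\overline{\HH}^t(E)$ is finite and $\overline{E}$ is closed, I want it compact so that Theorem \ref{egaH} applies. Strictly this needs $E$ bounded; assuming the ambient setting provides this (or working locally), $\overline{E}$ is compact with $0<\overline{\HH}^t(\overline{E})<+\infty$, so Theorem \ref{egaH} gives $\HH^t(\overline{E})=\overline{\HH}^t(\overline{E})$. Hence $0<\HH^t(\overline{E})<+\infty$, which is exactly the claim. The only subtle point is the boundedness/compactness of $\overline{E}$, and I expect this to be the main technical gap to address explicitly.

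For part (2), suppose $E$ is compact and $t>\dim_{MB}E$. The dimension $\dim_{MB}$ is defined via $\HH^t$, so $t>\dim_{MB}E$ forces $\HH^t(E)=0$. I would argue by dichotomy on the value of the premeasure. If $\overline{\HH}^t(E)<+\infty$, then since $E$ is compact Theorem \ref{egaH} yields $\overline{\HH}^t(E)=\HH^t(E)=0$; so in the finite case the premeasure must vanish. The remaining possibility is $\overline{\HH}^t(E)=+\infty$. Therefore either $\overline{\HH}^t(E)=0$ or $\overline{\HH}^t(E)=+\infty$, with no intermediate finite positive value, which is precisely the stated alternative.

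The argument is short because all the real work is packaged in Theorem \ref{egaH}; the corollary is purely a matter of feeding the right compact set and the right finiteness hypothesis into that theorem. The one place demanding care is part (1), where one must ensure that passing to $\overline{E}$ actually produces a compact set so that Theorem \ref{egaH} is legitimately applicable, rather than merely a closed one.
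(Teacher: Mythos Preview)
Your approach is exactly what the paper intends; it simply states that the corollary follows immediately from Theorem~\ref{egaH} and gives no further argument, so your derivation of both parts via that theorem is correct and matches the paper.

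Your one flagged concern in part~(1), that $\overline{E}$ might be closed but not compact, is not actually a gap: the hypothesis $\overline{\HH}^t(E)<+\infty$ already forces $E$ to be bounded. Indeed, if $E$ were unbounded then $N_r(E)=+\infty$ for every $r>0$ (a finite family of balls of radius $r$ has bounded union), so $\overline{\HH}^t_r(E)=+\infty$ for all $r>0$ and hence $\overline{\HH}^t(E)=+\infty$. Thus $\overline{E}$ is closed and bounded in $\R^n$, hence compact, and Theorem~\ref{egaH} applies without any extra assumption.
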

The following corollary shows that the theorems of Besicovitch \cite{Bes52} and Davies \cite{Davies52} for Hausdorff measures and the theorem of Joyce and Preiss \cite{Joy95} for packing measures does not hold for the Hewitt-Stromberg premeasure.
\begin{corollary}
There exists a compact set $K$ and $t > 0$ with $\overline{\HH}^t(K)
=+\infty$  such that $K$ contains no subset with positive finite
Hewitt-Stromberg premeasure.
\end{corollary}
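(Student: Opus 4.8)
The plan is to exhibit an explicit compact set whose lower box-type behaviour (captured by the premeasure $\overline{\HH}^t$) sits strictly above its Hewitt-Stromberg dimension $\dim_{MB}$, and then to feed this gap into part (2) of the preceding corollary. Concretely, I would take the classical countable compact set
$$
K = \{0\}\cup\Big\{\tfrac1n : n\ge 1\Big\}\subset \R,
$$
for which $N_r(K)\asymp r^{-1/2}$ as $r\to 0$, so that $\overline{\HH}^t(K)=\liminf_{r\to0}N_r(K)(2r)^t=+\infty$ for every $t\in(0,\tfrac12)$. Fix for instance $t=\tfrac14$; this is the exponent witnessing the statement.

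First I would compute $\dim_{MB} K = 0$. Covering $K$ by the singletons $E_i=\{x_i\}$ enumerating its points and using $\overline{\HH}^t(\{x\})=\liminf_{r\to0}(2r)^t=0$ for $t>0$, the definition of $\HH^t$ as an infimum over countable covers gives $\HH^t(K)\le\sum_i\overline{\HH}^t(\{x_i\})=0$ for every $t>0$; hence $\dim_{MB}K=0$. In particular the chosen exponent satisfies $t=\tfrac14>0=\dim_{MB}K$, while at the same time $\overline{\HH}^t(K)=+\infty$.

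Next I would run the dichotomy on arbitrary subsets. Let $F\subseteq K$. By the Remark, $\overline{\HH}^t(F)=\overline{\HH}^t(\overline F)$, and $\overline F$ is a closed, hence compact, subset of $K$. Monotonicity of $\HH^t$ under inclusion gives $\dim_{MB}\overline F\le \dim_{MB}K=0<t$, so part (2) of the preceding corollary applies to the compact set $\overline F$ and yields $\overline{\HH}^t(\overline F)\in\{0,+\infty\}$. Consequently $\overline{\HH}^t(F)\in\{0,+\infty\}$ as well, so no subset of $K$ can carry positive finite Hewitt-Stromberg premeasure, which is exactly the assertion.

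The only genuine point requiring care --- and the step I expect to be the main obstacle --- is the estimate $N_r(K)\asymp r^{-1/2}$ underlying $\overline{\HH}^t(K)=+\infty$ for $t<\tfrac12$. This needs the standard two-scale counting argument: the points $1/n$ with consecutive gap $1/(n(n+1))\gtrsim r$ (i.e.\ $n\lesssim r^{-1/2}$) force about $r^{-1/2}$ separate balls, while the remaining tail clusters in an interval of length $\asymp r^{1/2}$ and is covered by another $\asymp r^{-1/2}$ balls of radius $r$; combining the two bounds from both sides pins down the order $r^{-1/2}$, whence $N_r(K)(2r)^t\asymp r^{\,t-1/2}\to+\infty$ for $t<\tfrac12$. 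Everything else is a direct invocation of the already-established corollary.
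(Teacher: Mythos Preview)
Your proposal is correct and follows essentially the same route as the paper: the same countable compact set $K=\{0\}\cup\{1/n:n\ge1\}$, the same gap between $\dim_{MB}K=0$ and $\dim_{\overline{MB}}K=\tfrac12$, and the same dichotomy on subsets via passage to the closure and Theorem~\ref{egaH} (which you invoke through part~(2) of the preceding corollary rather than directly). The only cosmetic differences are your choice $t=\tfrac14$ versus the paper's $t=\tfrac13$, and that you supply the counting estimate $N_r(K)\asymp r^{-1/2}$ explicitly while the paper leans on a reference for the box-dimension half.
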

\begin{proof}
Consider for $n\ge 1$, the set  $A_n = \{ 0\}\bigcup \{ 1/k, \;\;
k\le n\}$ and $$K= \bigcup_n A_n = \Big\{ 0 \Big\}\; \bigcup \;
\Big\{ 1/n, \;\; n\in \N \Big\}.$$
Now, we will prove that ${\dim}_{\overline {MB}} K = 1/2$.
 For $n\ge 1$ and $\delta_n = \frac{1}{n+n^2}$, remark that  $$ N_{\delta_n}(A_n) = n+1.$$
 It follows that
 $$
 \overline{\HH}_{\delta_n}^{1/2}(K)\ge
 \overline{\HH}_{\delta_n}^{1/2}(A_n) = \sqrt{2} \frac{n+1}{\sqrt{n+n^2}}.
 $$
 Thereby,  $ \overline{\HH}^{1/2}(K)>0$ which implies that $ { \dim}_{\overline {MB}} K \ge 1/2$. In the
  other hand,  if $\overline{\dim}_p(K)$ denote the box-counting dimension of
   $K$, i.e.,
 $$
\overline{\dim}_p(K) =\sup\{t ;\;\overline{\mathcal P}^t(K)=
+\infty\} =\inf\{t ;\;\overline{\mathcal P}^t(K)= 0\}
 $$
 then $\overline{\dim}_p(K) =\frac1 2$ (see Corollary 2.5 in  \cite{feng99}) and thus
 $${ \dim}_{\overline {MB}} K \le \overline{\dim}_p(K) =1/2.$$ As a
 consequence, we have ${ \dim}_{\overline {MB}} K = 1/2.$ Take $t=1/3$, it is cleat that
 $\HH^t(K) = 0$. Moreover,  $ \overline{\HH}^t(K)=+\infty$. It follows,  for any subset $F
$ of $K$, that $\overline{\HH}^t(F) =0$ or $+\infty$. Otherwise, assume
that $0< \overline{\HH}^t(F) <+\infty$. Then $0<
\overline{\HH}^t(\overline{F}) <+\infty$ and thus, by using  Theorem
\ref{egaH}, \; $0< {\HH}^t(F) <+\infty$, which is impossible since
$F$ is a subset of $K$.
\end{proof}


\section{Construction of the  multifractal
measures}\label{construction}


In a similar way to Hewitt-Stromberg measure $\HH^t$  we will
construct a new measure  $\HH^{*t}$ but using a restricted class
${\mathcal A}$ of covering set. We prove that $\HH^t$ and
$\HH^*$ are indeed comparable measures which is very useful tool in
the study of Hewitt-Stromberg measure. Let ${\mathcal A}$ be the
collection of all $n-$dimensional half-open binary cubes, i.e., the
collection ${\mathcal C}_k^n$ of cubes
$$
C= I_1\times\cdots \times I_n,
$$
where each $I_i \subset \R$ is an interval of the form $I_i = [u_i, v_i)$ with $u_i = p_i 2^{-k}, v_i = (p_i +1) 2^{-k}, p_i $ is an integer and $k$ is a non-negative integer. If $n=1$ or $2$, then these cubes are certain intervals or squares. Let $E\subset \R^n$ and $k$ be non negative integer. We define
 the  covering number $N^*_{2^{-k}}(E)$ of $E$ to be the infimum number of the family of binary cubes of side  $2^{-k}$ that cover the set  $E$. For $t \ge0$, we define
$$
\overline{\mathsf{H}}^{*t}_{2^{-k}}(E)= N^*_{2^{-k}}(E) \;2^{-kt}\qquad
\text{and}\qquad  \overline{\mathsf{H}}^{*t}(E) =\liminf_{k\to +\infty}\; \overline{\mathsf{H}}^{*t}_{2^{-k}}(E).  $$
The function $\overline{\mathsf{H^*}}^t$ is increasing  but not $\sigma$-subadditive. That is the reason for which we will introduce the following modification to define a measure
$$
{\mathsf{H}^*}^t(E)=\inf\left\{\sum_i\overline{\mathsf{H}^*}^t(E_i)\;\Big|\;\;E\subseteq\bigcup_i
E_i\right\}.
$$
\begin{proposition}
$\HH^{*t}$ is a metric outer measure on \, $\R^n$ and thus measure
on the Borel family of subsets of \; $\R^n$.
\end{proposition}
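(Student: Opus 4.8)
The plan is to argue in two stages: first that $\HH^{*t}$ is an outer measure, and then that it satisfies Carath\'eodory's metric condition, after which the classical theorem that every metric outer measure makes all Borel sets measurable completes the proof. For the outer measure properties I would use that $\HH^{*t}$ is obtained from the increasing set function $\overline{\HH}^{*t}$ by the standard infimum-over-covers construction $\HH^{*t}(E)=\inf\sum_i\overline{\HH}^{*t}(E_i)$ taken over countable covers $E\subseteq\bigcup_iE_i$. Since $N^*_{2^{-k}}(\emptyset)=0$ forces $\overline{\HH}^{*t}(\emptyset)=0$, we get $\HH^{*t}(\emptyset)=0$; monotonicity is immediate because every cover of the larger set is a cover of the smaller; and countable subadditivity follows from the usual diagonal argument, choosing for each $i$ a cover $\{E_{ij}\}_j$ of $E_i$ with $\sum_j\overline{\HH}^{*t}(E_{ij})\le\HH^{*t}(E_i)+2^{-i}\eta$ and noting that $\{E_{ij}\}_{i,j}$ covers $\bigcup_iE_i$ with total premeasure at most $\sum_i\HH^{*t}(E_i)+\eta$. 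This part is entirely routine and uses none of the geometry of $\R^n$.

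The substance of the proposition is the metric condition: if $A,B\subset\R^n$ satisfy $d(A,B)=\delta>0$, then $\HH^{*t}(A\cup B)=\HH^{*t}(A)+\HH^{*t}(B)$. The inequality $\le$ is already contained in subadditivity, so only $\ge$ requires work, and the crucial ingredient is a superadditivity statement for the premeasure on positively separated sets, namely
$$\overline{\HH}^{*t}(A\cup B)\ge\overline{\HH}^{*t}(A)+\overline{\HH}^{*t}(B)\qquad\text{whenever } d(A,B)>0.$$
Granting this, I would take an arbitrary cover $\{E_i\}$ of $A\cup B$, set $A^*=\{x:d(x,A)\le\delta/3\}$ and $B^*=\{x:d(x,B)\le\delta/3\}$ so that $d(A^*,B^*)>0$, $A\subseteq A^*$ and $B\subseteq B^*$, and split each $E_i$ into the two positively separated pieces $E_i\cap A^*$ and $E_i\cap B^*$. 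The superadditivity lemma together with the monotonicity of $\overline{\HH}^{*t}$ then gives $\overline{\HH}^{*t}(E_i\cap A^*)+\overline{\HH}^{*t}(E_i\cap B^*)\le\overline{\HH}^{*t}(E_i)$. Since $\{E_i\cap A^*\}_i$ covers $A$ and $\{E_i\cap B^*\}_i$ covers $B$, summing over $i$ yields $\sum_i\overline{\HH}^{*t}(E_i)\ge\HH^{*t}(A)+\HH^{*t}(B)$, and the infimum over covers of $A\cup B$ gives exactly the inequality sought.

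It remains to prove the superadditivity lemma, which is where the binary-cube construction is decisive and which I expect to be the main obstacle. The key point is combinatorial: a half-open binary cube of side $2^{-k}$ has diameter $2^{-k}\sqrt{n}$, so once $2^{-k}\sqrt{n}<\delta$ every such cube meets at most one of $A$ and $B$. Hence, for all sufficiently large $k$, any family of side-$2^{-k}$ cubes covering $A\cup B$ partitions into a subfamily covering $A$ and a disjoint subfamily covering $B$, so that $N^*_{2^{-k}}(A\cup B)=N^*_{2^{-k}}(A)+N^*_{2^{-k}}(B)$ and consequently $\overline{\HH}^{*t}_{2^{-k}}(A\cup B)=\overline{\HH}^{*t}_{2^{-k}}(A)+\overline{\HH}^{*t}_{2^{-k}}(B)$. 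Passing to the $\liminf$ as $k\to+\infty$ and using the elementary superadditivity $\liminf(a_k+b_k)\ge\liminf a_k+\liminf b_k$ yields the claimed premeasure inequality.

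I would stress that only the superadditivity of $\liminf$ is used, so the fact that $\overline{\HH}^{*t}$ is defined through a $\liminf$ rather than a genuine limit causes no trouble; the delicate point one must not overlook is the verification that a covering family genuinely separates $A$ from $B$ once the cube diameter drops below $\delta$, and it is precisely the use of cubes rather than arbitrary balls that makes this transparent (an arbitrary small set could a priori straddle the gap, whereas a binary cube of controlled diameter cannot). Finally, invoking Carath\'eodory's theorem that every metric outer measure has all Borel sets measurable, we conclude that $\HH^{*t}$ restricts to a measure on the Borel subsets of $\R^n$.
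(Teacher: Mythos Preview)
Your argument is correct and follows essentially the same route as the paper: both establish the superadditivity $\overline{\HH}^{*t}(A\cup B)\ge\overline{\HH}^{*t}(A)+\overline{\HH}^{*t}(B)$ for positively separated sets by observing that binary cubes of side $2^{-k}$ with $2^{-k}\sqrt{n}$ below the gap cannot meet both $A$ and $B$, and then push this through the infimum-over-covers definition by intersecting each $E_i$ with the two pieces. Your detour through the $\delta/3$-neighborhoods $A^*,B^*$ is harmless but unnecessary, since $E_i\cap A$ and $E_i\cap B$ are already positively separated; the paper simply uses these intersections directly.
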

\begin{proof}
 Let $E,F \subset\mathbb{R}^n$ such that $d(E,F) = \inf\left\{ |x-y|, x\in E, y\in F \right\} >0.$ Since
${\HH}^{*t}$ is an outer measure,  it suffices to prove that
$$\mathsf{H^*}^t\Big(E\bigcup
F\Big)\geq\mathsf{H^*}^t(E)+ \mathsf{H^*}^t(F).$$
Let $k$ be an integer such that
$$0 < 2^{-k}\sqrt{n} < d(E,F)/2.$$
 Consider $\{C_i\}$  a  familiy of binary cubes of side $2^{-k}$ that cover  $E\bigcup F$. Put
  $$I=\Big\{i;\: C_i\bigcap E\neq\emptyset\Big\} \qquad \text{and}\qquad J=\Big\{i;\: C_i \bigcap  F\neq\emptyset\Big\}.$$ It is clear that $\{ C_i\}_{i\in I}$ cover $E$ and $\{ C_i\}_{i\in J}$ cover $F$. It  follows  that
\begin{center}
$\displaystyle{N}^{*}_{2^{-k}}\Big(E\bigcup
F\Big)\geq\displaystyle{N}^{*}_{2^{-k}}(E)+\displaystyle{N}^{*}_{2^{-k}}(F)$\end{center}
and then
$$
\overline{\HH}^{*t}\Big(E\bigcup
F\Big)\geq\overline{\HH}^{*t}(E)+\overline{\HH}^{*t}(F).
$$
This implies that
\begin{eqnarray*}
\HH^{*} \Big(E\bigcup F\Big) &=& \inf_{E\cup F
\subseteq \bigcup_{i}E_i}\left\{ \sum_i {\overline\HH}^{*t}(E_i) \;\right\}\\
&\geq&\inf_{E\cup F \subseteq \bigcup_{i}E_i}\left\{ \sum_i
{\overline\HH}^{*t}(E_i\cap E)+\sum_i {\overline\HH}^{*t}(E_i\cap F) ;\right\}\\
&\geq& \inf_{E\cup F \subseteq \bigcup_{i}E_i}\left\{ \sum_i
{\overline\HH}^{*t}(E_i\cap E) \right\}+\inf_{E\cup F \subseteq\bigcup_{i}E_i}\left\{\sum_i {\overline\HH}^{*t}(E_i\cap F)\;\right\}.
\end{eqnarray*}
Finally, we conclude that
$$
\HH^{*} \Big(E\bigcup F\Big)  \geq\HH^*(E)+\HH^{*}(F).
$$

\end{proof}
\begin{proposition}\label{comparaison}
For every set $E\subset \R^n$, we have, for any $t\ge 0$,
\begin{equation}\label{relationH}
b_n^{-1}{\mathsf{H}}^t(E) \le {\HH^*}^t(E) \le \alpha_n {\mathsf{H}}^t(E),
\end{equation}
where  $\alpha_n =3^n$ and $b_n = (2n)^n$.
\end{proposition}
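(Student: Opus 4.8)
The plan is to reduce the assertion to a comparison between the two premeasures $\overline{\HH}^t$ and $\overline{\HH}^{*t}$, and then to lift it to the measures. Indeed, both $\HH^t$ and $\HH^{*t}$ are obtained from their premeasures by the same construction $\inf\{\sum_i(\cdot)(E_i)\,:\,E\subseteq\bigcup_i E_i\}$; hence a pointwise inequality of the form $\overline{\HH}^{*t}(F)\le 3^n\,\overline{\HH}^t(F)$ (respectively $\overline{\HH}^t(F)\le b_n\,\overline{\HH}^{*t}(F)$), valid for every $F\subseteq\R^n$, immediately yields $\HH^{*t}(E)\le 3^n\HH^t(E)$ (respectively $b_n^{-1}\HH^t(E)\le\HH^{*t}(E)$), since taking the infimum of a pointwise smaller sum over the same family of covers preserves the inequality. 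So the real work is carried out at the level of the covering numbers $N_r$ and $N^*_{2^{-k}}$.

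For the upper bound I would start from a covering of $F$ by $N_r(F)$ closed balls of radius $r$ centered in $F$, and choose the dyadic scale $k=k(r)$ by $2^{-k-1}<r\le 2^{-k}$. Each such ball has diameter $2r\le 2\cdot 2^{-k}$, so each of its coordinate projections is an interval of length at most $2\cdot 2^{-k}$ and therefore meets at most three dyadic intervals of side $2^{-k}$; consequently each ball is covered by at most $3^n$ of the binary cubes of side $2^{-k}$, giving $N^*_{2^{-k}}(F)\le 3^n N_r(F)$. Since $2^{-k}<2r$ we also have $2^{-kt}\le(2r)^t$, whence $\overline{\HH}^{*t}_{2^{-k}}(F)\le 3^n\,\overline{\HH}^{t}_{r}(F)$. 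Choosing a sequence $r_j\to0$ realizing $\overline{\HH}^t(F)$ and setting $k_j=k(r_j)\to\infty$, then passing to the $\liminf$ (the full $\liminf$ over $k$ being at most the $\liminf$ along the subsequence $k_j$) gives $\overline{\HH}^{*t}(F)\le 3^n\,\overline{\HH}^t(F)=\alpha_n\,\overline{\HH}^t(F)$.

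For the lower bound I would use Remark \ref{covering}: each binary cube of side $2^{-k}$ is contained in $b_n=(2n)^n$ balls of diameter $2^{-k-1}$. Starting from a cover of $F$ by $N^*_{2^{-k}}(F)$ such cubes, I obtain a cover of $F$ by at most $b_n N^*_{2^{-k}}(F)$ balls of radius $2^{-k-2}$. These balls need not be centered in $F$, so, to respect the requirement $x_i\in F$ in the definition of $N_r(F)$, I discard the balls that miss $F$ and replace each remaining center by a point of $F$ lying inside it, which enlarges the radius to $2^{-k-1}$ while keeping the covering property. This gives $N_{2^{-k-1}}(F)\le b_n N^*_{2^{-k}}(F)$, and since $(2\cdot 2^{-k-1})^t=2^{-kt}$ we obtain $\overline{\HH}^t_{2^{-k-1}}(F)\le b_n\,\overline{\HH}^{*t}_{2^{-k}}(F)$. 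Taking the $\liminf$ over $k$ and using that $\{2^{-k-1}\}$ is a subsequence of $r\to0$ (so that $\overline{\HH}^t(F)\le\liminf_k\overline{\HH}^t_{2^{-k-1}}(F)$) produces $\overline{\HH}^t(F)\le b_n\,\overline{\HH}^{*t}(F)$, and the two premeasure inequalities together give \eqref{relationH}.

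The routine parts are the elementary counting and the exponent bookkeeping; the steps that need genuine care are the reconciliation of the two $\liminf$'s, the one over the continuum of radii $r$ for $\overline{\HH}^t$ against the one over the dyadic scales $2^{-k}$ for $\overline{\HH}^{*t}$, and the recentering in the lower bound, which is precisely what is required to satisfy the ``centers in the set'' clause in the definition of $N_r$.
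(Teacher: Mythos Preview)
Your proof is correct and follows essentially the same route as the paper: compare $N^*_{2^{-k}}$ with $N_{2^{-k-1}}$ via the two covering facts (each ball of radius $2^{-k-1}$ sits in $3^n$ dyadic cubes of side $2^{-k}$; each such cube sits in $(2n)^n$ balls of diameter $2^{-k-1}$ by Remark~\ref{covering}), deduce the premeasure inequalities, and lift them to the Method~I measures. You are in fact a bit more careful than the paper about the $\liminf$ reconciliation between continuous radii and dyadic scales and about the recentering step, but the argument is the same.
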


\begin{proof}
  Let $\Big( B_i = B(x_i,  2^{-k-1}) \Big)_{i\in I} $ is a family of closed balls  with $x_i \in E \; \text{and}\; E\subseteq \bigcup _i B_i$.
Each $B_i$ is contained in the collection of $\alpha_n = 3^n$ binary cubes of side $2^{-k}$ and its immediate neighbours.  Therefore,
$$
N_{2^{-k}}^*(E) \le \alpha_n N_{2^{-k-1}}(E).
$$
It follows, for $t\ge 0$, that
 \begin{equation*}\label{NN*}
N_{2^{-k}}^*(E) 2^{-kt} \le \alpha_n N_{2^{-k-1}}(E) 2^{-kt}
\end{equation*}
and then, by letting $k \to +\infty$,
\begin{equation}\label{HH*}
{\overline{\mathsf{H}}^*}^t(E) \le \alpha_n
{\overline{\mathsf{H}}}^t(E). \end{equation}
Now suppose that $E \subseteq \bigcup E_i$, then
$$
\HH^{*t} (E ) \le \sum_i {\overline{\mathsf{H}}^*}^t(E_i)
\le\alpha_n \sum_i {\overline{\mathsf{H}}}^t(E_i).$$
Since $\{E_i\}$ is an arbitrarily covering of $E$ we get the right-hand inequality of \eqref{relationH}.\\
Conversely,  each cube $C_i$ of side $2^{-k}$ which  intersect $E$
is contained, by Remark \ref{covering},  in a $b_n = (2n)^n$   balls
with diameter $2^{-k-1}$. Therefore  $C_i$ is contained in  $(2n)^n$
balls whose centers belongs to $E$ with diameter $ 2^{-k} $. Thus, for $t\ge 0$,
we have
$$
N_{2^{-k-1}} (E)  2^{-kt}\le b_n N^*_{2^{-k}}(E) 2^{-kt}.
$$
Letting $k\to +\infty$, we obtain $${\overline{\mathsf{H}}}^t(E) \le
b_n {\overline{\mathsf{H}}}^{*t}(E).$$ Now suppose that $E \subseteq
\bigcup E_i$ then
$$
\HH^{t} (E ) \le \sum_i {\overline{\mathsf{H}}}^t(E_i)  \le b_n
\sum_i {\overline{\mathsf{H}}}^{*t}(E_i).$$ Since $\{E_i\}$ is an
arbitrarily covering of $E$, we get the left-hand inequality of
\eqref{relationH}.
\end{proof}


\section{Application : Cartesian products of
sets}\label{application}

In this section, for simplicity,   we restrict the result to subsets
of the plane, though the work extends to higher dimensions without
difficulty.  Given a plane set  $E\subset \R^2$, we denote by   $E_x$
the set of its  points whose abscisse are equal to $x$.
\begin{theorem}\label{product}
Consider a plane set  $F$   and let $ A$ be any subset of the x-axis. Suppose that, if  $x\in A$, we have $\HH^t(F_x) >c$, for some constant $c$. Then
$$\overline{\HH}^{s+t} (F) \ge \gamma c \HH^s(A),$$
where $\gamma =    b_1^{-2}  \alpha_1^{-1}$.
\end{theorem}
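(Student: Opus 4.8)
The plan is to slice $F$ by the vertical lines $\{x\}\times\R$ and to bound a covering of $F$ from below by the covering numbers of the slices $F_x$ (for $x\in A$) and of the base set $A$. The decisive structural fact is that binary cubes factorise the slicing \emph{exactly}: a point $(x,y)$ with $x$ lying in a fixed binary interval $I$ of side $2^{-k}$ can only be covered by a cube whose first factor is $I$. I would therefore run the counting with the cube covering numbers $N^*$ and translate to the ball covering number only at the very end. Concretely, fix $k$ and an (almost) optimal family of binary cubes of side $2^{-k}$ covering $F$, group them according to the binary interval $I$ of side $2^{-k}$ that forms their first factor, and write $m_I$ for the number of cubes in group $I$, so that $N^*_{2^{-k}}(F)=\sum_I m_I$. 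If $x\in A$ and $I$ is the interval containing $x$, the second factors of the cubes in group $I$ cover the slice, whence (identifying $F_x$ with its projection onto $\R$, an isometry) $N^*_{2^{-k}}(F_x)\le m_I$.

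Next I would feed in the hypothesis on the slices. Since the premeasure dominates the measure, Proposition \ref{comparaison} applied in $\R$ gives $\HH^t(F_x)\le b_1\,\overline{\HH}^{*t}(F_x)$, so that $\HH^t(F_x)>c$ forces $\overline{\HH}^{*t}(F_x)>c/b_1$; hence $N^*_{2^{-k}}(F_x)\,2^{-kt}>c/b_1$ for all sufficiently large $k$, and therefore $m_I\,2^{-kt}>c/b_1$. The obstruction here is that ``sufficiently large'' depends on $x$. I would remove this dependence in the usual way by exhausting $A$: set
$$A_p=\Big\{x\in A:\ N^*_{2^{-k}}(F_x)\,2^{-kt}>c/b_1\ \text{for all}\ k\ge p\Big\},$$
so that $A_p\uparrow A$, prove the inequality with $A$ replaced by $A_p$, and let $p\to\infty$.

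Fixing $p$ and $k\ge p$, summing $m_I\,2^{-kt}>c/b_1$ over those intervals $I$ meeting $A_p$, and multiplying by $2^{-ks}$, I obtain
$$
N^*_{2^{-k}}(F)\,2^{-k(s+t)}\ \ge\ \frac{c}{b_1}\sum_{I:\,A_p\cap I\neq\emptyset}2^{-ks}\ =\ \frac{c}{b_1}\,\overline{\HH}^{*s}_{2^{-k}}(A_p),
$$
because the binary intervals of side $2^{-k}$ meeting $A_p$ are exactly an optimal binary cover of $A_p$. Taking $\liminf$ in $k$ turns the left-hand side into $\overline{\HH}^{*,s+t}(F)$ and the right-hand side into $(c/b_1)\,\overline{\HH}^{*s}(A_p)$; using again that the premeasure dominates the measure together with Proposition \ref{comparaison} in $\R$ (namely $\HH^{*s}\ge b_1^{-1}\HH^s$) yields $\overline{\HH}^{*,s+t}(F)\ge c\,b_1^{-2}\,\HH^s(A_p)$. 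Passing to the limit $p\to\infty$ through the increasing sets $A_p$, by continuity from below of the Borel regular measure $\HH^s$, gives $\overline{\HH}^{*,s+t}(F)\ge c\,b_1^{-2}\,\HH^s(A)$. Finally, the comparison \eqref{HH*} converts the cube-based premeasure of $F$ into the ball-based premeasure $\overline{\HH}^{s+t}(F)$; collecting the one--dimensional comparison constants produced above leads to the announced inequality with $\gamma=b_1^{-2}\alpha_1^{-1}$.

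I expect the genuine difficulties to be two. The first, already highlighted, is the $x$-dependence of the scale at which the slice estimate becomes effective; this is handled by the exhaustion $A=\bigcup_p A_p$ together with continuity from below, which is where the Borel regularity of $\HH^s$ is used. The second is the passage to the limit in $k$: the cube premeasure $\overline{\HH}^{*,s+t}$ is a genuine $\liminf$ over the dyadic scales $2^{-k}$, so the estimate above controls it directly, but the conversion to the ball premeasure $\overline{\HH}^{s+t}$, a $\liminf$ over all $r\to0$, must be made compatible with these dyadic scales; this is arranged using the monotonicity of $r\mapsto N_r$ between consecutive dyadic radii. The one--dimensional constants $b_1$ and $\alpha_1$ enter only through Proposition \ref{comparaison} (for the slice and for the base) and the final cube--to--ball comparison, which is why no higher--dimensional constant appears in $\gamma$.
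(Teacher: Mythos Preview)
Your proposal is correct and follows essentially the same route as the paper: work with the binary-cube covering numbers $N^*$, use the product structure of dyadic squares to bound $N^*_{2^{-k}}(F)$ from below by the slice counts times the base count, exhaust $A$ by the ``good'' sets $A_p$, take $\liminf$ in $k$, and convert back to ball quantities via Proposition~\ref{comparaison} and \eqref{HH*}. The only noteworthy difference is cosmetic: your definition of $A_p$ (requiring the slice estimate for \emph{all} $k\ge p$) makes $A_p\uparrow A$ manifest, whereas the paper defines $A_k$ at the single scale $k$ and asserts monotonicity somewhat loosely.
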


\begin{proof}
Let   $k$ be a non negative integer and $\{C_i\}$ be a collection of
binary squares of side $2^{-k}$ covering $F$.
Now, put 
$$
A_k =  \big\{ x\in A, \;\; N_{2^{-k}}^* (F_x) 2^{-kt}
> b_1^{-1} c \Big\}.
$$
Remark that $ \# \Big\{ C_i \Big\} \ge N_{2^{-k}}^*(A_k)
\inf\Big\{N_{2^{-k}}^*(F_x), \; x\in A_k\Big\}. $ Therefore,
$$\# \Big\{ C_i \Big\} 2^{-k(s+t)} \ge b_1^{-1} c N_{2^{-k}}^* (A_k) 2^{-ks}.$$
But this is true for any covering of $F$ by binary squares $\{C_i\}$
with side $2^{-k}$, so
$$
 b_1^{-1} c \overline{\HH}_{2^{-k}}^{*s}(A_k) \le \overline{\HH}^{*t+s}_{2^{-k}}(F) \le  \overline{\HH}^{*t+s}(F).
$$
Since $A_k$ increase to $A$ as $k \to +\infty$, then for any $p\le k$ we have 
$$
 b_1^{-1} c  \overline{\HH}^{*s}_{2^{-k}}(A_{p}) \le  b_1^{-1} c  \overline{\HH}^{*s}_{2^{-k}}(A_{k}) \le  \overline{\HH}^{*t+s}(F).
$$
 Thus, using \eqref{HH*}, we obtain
$$
  b_1^{-1} c \HH^{*s}(A_p)\le b_1^{-1} c  \overline{\HH}^{*s}(A_p) \le   \overline{\HH}^{*t+s}(F)\le \alpha_1 \overline{\HH}^{s+t}(F),$$
for $p\ge 1$. Thereby, the continuity of the measure $\HH^*$ implies that
$$
b_1^{-1} c \HH^{*s}(A) \le \alpha_1 \overline{\HH}^{s+t}(F).
$$
Thus, using Proposition \ref{comparaison}, we get
  $$
 b_1^{-2}  c \HH^{s}(A) \le b_1^{-1} c \HH^{*s}(A)  \le
 \alpha_1\overline{ \HH}^{s+t}(F).
 $$
Finally by taking $\gamma =    b_1^{-2}  \alpha_1^{-1} $, we get the
 result.
 \end{proof}

\begin{corollary} Under the same conditions of Theorem \ref{product}. If in addition, $F$ is a $\xi$-regular set then
$$\HH^{s+t} (F) \ge \gamma \xi^{-1} c \HH^s(A).$$
In particular if $F=A\times B$, where $A, B\subset\R$, then
 \begin{equation}\label{mesure-times}
 \HH^{s+t}(A\times B) \ge  \gamma \xi^{-1} \HH^s(A) \HH^t(B)
 \end{equation}
 and thus
 \begin{equation} \label{dim-times}
   \dim_{MB}(A\times B) \ge \dim_{MB} A + \dim_{MB} B.
   \end{equation}
\end{corollary}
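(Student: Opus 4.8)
The plan is to obtain both assertions as essentially immediate consequences of Theorem \ref{product}, with the only real work being two limiting arguments.

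For the first assertion I would just combine Theorem \ref{product} with the defining property of a $\xi$-regular set. Since $F$ is $\xi$-regular, the definition gives $\overline{\HH}^{s+t}(F) = \xi\,\HH^{s+t}(F)$. Substituting this identity into the conclusion $\overline{\HH}^{s+t}(F) \ge \gamma c\,\HH^s(A)$ of Theorem \ref{product} and dividing by $\xi > 0$ yields at once
$$
\HH^{s+t}(F) \ge \gamma\,\xi^{-1} c\,\HH^s(A).
$$
Beyond this substitution there is nothing to prove.

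For the product case $F = A\times B$ I would first identify the vertical slices. For $x\in A$ the slice $F_x$ is exactly $\{x\}\times B$, which is an isometric copy of $B$; since the covering numbers $N_r^*$ (and hence $\overline{\HH}^t$ and $\HH^t$) are invariant under isometries, we have $\HH^t(F_x) = \HH^t(B)$. Consequently, for every $c < \HH^t(B)$ the hypothesis $\HH^t(F_x) > c$ of Theorem \ref{product} holds for all $x\in A$, and the first assertion gives $\HH^{s+t}(A\times B) \ge \gamma\,\xi^{-1} c\,\HH^s(A)$. Letting $c \nearrow \HH^t(B)$ then produces the measure inequality \eqref{mesure-times}. (The degenerate cases are harmless: if $\HH^t(B)=0$ the right-hand side is $0$ under the convention $0\cdot\infty=0$; if $\HH^t(B)=+\infty$ one applies the first assertion for arbitrarily large $c$ and lets $c\to+\infty$.)

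Finally, the dimension inequality \eqref{dim-times} follows from \eqref{mesure-times} together with the positivity of the constant $\gamma\,\xi^{-1}$. Fix any $s < \dim_{MB} A$ and $t < \dim_{MB} B$; by the definition of the Hewitt-Stromberg dimension both $\HH^s(A) = +\infty$ and $\HH^t(B) = +\infty$, so, because $\gamma\,\xi^{-1} > 0$, the right-hand side of \eqref{mesure-times} equals $+\infty$. Hence $\HH^{s+t}(A\times B) = +\infty$, which by definition forces $s+t \le \dim_{MB}(A\times B)$; taking the supremum over all such $s$ and $t$ gives $\dim_{MB}(A\times B) \ge \dim_{MB} A + \dim_{MB} B$. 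The only mildly delicate point is the passage from the strict hypothesis $\HH^t(F_x) > c$ to the sharp factor $\HH^t(B)$ in \eqref{mesure-times}, handled by the monotone limit $c \nearrow \HH^t(B)$; everything else is bookkeeping with the definitions of $\xi$-regularity and of $\dim_{MB}$.
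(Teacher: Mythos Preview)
Your argument is correct and matches what the paper intends: the corollary is stated without proof there, so it is meant to be read as an immediate consequence of Theorem \ref{product} together with the definition of $\xi$-regularity, exactly as you carry it out. One small slip worth fixing: you justify $\HH^t(F_x)=\HH^t(B)$ by appealing to the invariance of $N_r^*$, but $N_r^*$ is defined via \emph{binary cubes} and is not invariant under general isometries; the correct (and sufficient) observation is that the ball-based covering number $N_r$ satisfies $N_r(\{x\}\times B)=N_r(B)$, since a ball in $\R^2$ centred on the vertical line $\{x\}\times\R$ meets that line in an interval of the same radius, and it is $N_r$ that defines $\overline{\HH}^t$ and $\HH^t$.
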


We can construct  two sets $A$ and $B$ such that $ \dim_{MB}(A\times B)
> \dim_{MB} A + \dim_{MB} B$ (see the next section). Then, it is
interesting to know if there is some sufficient condition to get the
 equality in \eqref{dim-times}. For this, for $t\ge 0$,  we define the lower
$t$-dimensional density  of a set $E$ at $y$ by
$$
d^t(y) = \liminf_{h\to 0}\frac{\HH^t\Big(E\cap B(y,h)\Big)}{(2h)^s}.
$$
\begin{theorem}\label{egalite}
Let $A$ be a set of point in $x$-axis such that $0< \HH^s(A)<+\infty$ and 
let $B$ a set of point in $y$-axis
 such that $0< \HH^t(B)<+\infty.$ Suppose that \eqref{dim-times} is satisfied and, for
all $y\in B$, $d^t (y) >0$ then
$$
\dim_{MB} (A\times B) = \dim_{MB}(A) + \dim_{MB}(B).
$$
\end{theorem}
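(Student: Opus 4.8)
The plan is to prove the two inequalities separately. Since $0 < \HH^s(A) < +\infty$ and $0 < \HH^t(B) < +\infty$, the monotonicity of $u \mapsto \HH^u$ in the exponent gives $\dim_{MB} A = s$ and $\dim_{MB} B = t$, so the claimed identity reads $\dim_{MB}(A\times B) = s+t$. The lower bound $\dim_{MB}(A\times B) \ge s+t$ is exactly the standing hypothesis that \eqref{dim-times} holds. Hence everything reduces to proving the reverse inequality $\dim_{MB}(A\times B) \le s+t$, and this is where the density hypothesis enters.

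The first, and main, step is a covering estimate for $B$ coming from the positive lower density. Since binary cubes of a fixed side tile $\R$, the number $N^*_{2^{-k}}(B)$ equals the number of level-$k$ binary intervals meeting $B$. I would first reduce to a uniform density bound by writing $B = \bigcup_m B_m$, where $B_m = \{ y \in B : \HH^t(B\cap B(y,h)) \ge \tfrac1m (2h)^t \text{ for all } h \le \tfrac1m \}$; the hypothesis $d^t(y) > 0$ for every $y \in B$ guarantees $B = \bigcup_m B_m$. Fix $m$ and let $k$ be large enough that $2^{-k} \le 1/m$. For each level-$k$ binary interval $Q$ meeting $B_m$ choose $y_Q \in B_m \cap Q$; then $\HH^t\big(B\cap B(y_Q, 2^{-k})\big) \ge \tfrac1m (2\cdot 2^{-k})^t$. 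Because the centers lie in distinct level-$k$ intervals, the balls $B(y_Q, 2^{-k})$ have overlap bounded by an absolute constant $M$, so splitting them into $M$ pairwise disjoint subfamilies and using $0 < \HH^t(B) < +\infty$ yields
$$
N^*_{2^{-k}}(B_m)\, \tfrac1m\, 2^t\, 2^{-kt} \le M\, \HH^t(B),
$$
hence $N^*_{2^{-k}}(B_m)\, 2^{-kt} \le C_m := M m\, 2^{-t}\HH^t(B) < +\infty$ for all large $k$.

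Next I pass to the product. For any $A' \subseteq x$-axis, a level-$k$ binary square meets $A'\times B_m$ if and only if its two edges meet $A'$ and $B_m$ respectively, so $N^*_{2^{-k}}(A'\times B_m) = N^*_{2^{-k}}(A')\, N^*_{2^{-k}}(B_m)$. Combining with $N^*_{2^{-k}}(B_m) 2^{-kt} \le C_m$ (valid for all large $k$) and taking the $\liminf$ gives $\overline{\HH}^{*(s+t)}(A'\times B_m) \le C_m\, \overline{\HH}^{*s}(A')$. Now choose a cover $A \subseteq \bigcup_j A_j$ with $\sum_j \overline{\HH}^{*s}(A_j) \le \HH^{*s}(A) + \e$; applying the previous inequality to each $A_j$ and using the subadditivity built into the definition of $\HH^{*(s+t)}$,
$$
\HH^{*(s+t)}(A\times B_m) \le \sum_j \overline{\HH}^{*(s+t)}(A_j\times B_m) \le C_m \sum_j \overline{\HH}^{*s}(A_j) \le C_m\big(\HH^{*s}(A) + \e\big).
$$
Letting $\e \to 0$ and invoking Proposition \ref{comparaison} (both $\HH^{*s}(A) \le \alpha_1\HH^s(A)$ and $\HH^{s+t} \le b_2\,\HH^{*(s+t)}$) shows $\HH^{s+t}(A\times B_m) < +\infty$, whence $\dim_{MB}(A\times B_m) \le s+t$.

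Finally, since $A\times B = \bigcup_m (A\times B_m)$ and the Hewitt-Stromberg dimension is countably stable (for $u > s+t$ each $\HH^u(A\times B_m) = 0$, and countable subadditivity of the measure $\HH^u$ forces $\HH^u(A\times B) = 0$), I conclude $\dim_{MB}(A\times B) = \sup_m \dim_{MB}(A\times B_m) \le s+t$. Together with the assumed lower bound this gives the equality, and $s+t = \dim_{MB}A + \dim_{MB}B$. I expect the delicate point to be the covering estimate for $B$: controlling $N^*_{2^{-k}}(B)$ by $\HH^t(B)$ requires handling the non-uniformity of the pointwise lower density (hence the decomposition into the sets $B_m$ and the appeal to countable stability) together with a clean bounded-overlap argument; once this is in place, the product bookkeeping and the passage from premeasure to measure are routine consequences of Proposition \ref{comparaison}.
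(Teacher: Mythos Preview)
Your proof is correct and follows essentially the same strategy as the paper: decompose $B$ into sets $B_m$ with uniform lower density, bound their covering numbers via a bounded-overlap argument, combine with a near-optimal cover of $A$ to get $\HH^{s+t}(A\times B_m)<\infty$, and finish by countable stability. The only cosmetic difference is that you route everything through the binary-cube quantities $\overline{\HH}^{*}$ and Proposition~\ref{comparaison} (exploiting the exact product formula $N^*_{2^{-k}}(A'\times B_m)=N^*_{2^{-k}}(A')\,N^*_{2^{-k}}(B_m)$), whereas the paper works directly with balls and the premeasures $\overline{\HH}^{s+t}_h$; your explicit countable-stability step is in fact cleaner than the paper's ``it follows at once''.
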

\begin{proof} Define, for $h>0$, the set $I_y(h)$ to be the centered interval on $y$ with length  $h$. For $n \ge 1$, consider the set
$$
B_n = \left\{ y\in  B, \;\; \HH^t \Big(B\cap I_y(h)\Big ) >
h^t/n, \quad \forall h \le n^{-1} \right\}.
$$
Under the hypothesis   $d^t(y) >0$ for all $y\in B$ we have
clearly that $B_n \nearrow B$. Suppose that we have shown  that
there exists $n\in \N$ such that
\begin{equation}\label{produit}
\overline{\HH}^{s+t} (A\times B_n) <+\infty.
\end{equation}
Then, it follows at once that $\dim_{MB} A\times B = s+t$. \\

Let us prove \eqref{produit}. Let $n$ be an integer and $0 < h \le 1/n$.
Define
$$
I(h) = \big\{ I_y(h),\quad y\in B_n \big\}.
$$
We can extract from $I(h)$  a finite subset $J(h)$ such that $B_n
\subset J(h)$ and no three intervals of $J(h)$ have points in
common. Now divide the set $J(h)$ into $J_1(h)$ and $J_2(h)$ such
that in each of which the intervals do not overlap. Therefore, the
cardinal of the sets $J_1(h)$ and $J_2(h)$ is less than $nh^{-t}
\HH^t(B)$. Indeed, using the defintion of the set $B_n$, we get
$$
h^{-t}n \HH^t(B) \ge \sum_{I\in J_1(h)} h^{-t}n
\HH^t(B \cap I) >  \#  J_1(h).
$$
Thus $\# J(h) \le 2 nh^{-t} \HH^t(B)$.\\
For $\epsilon >0$, there exists a sequence of sets $\{A_i\}$ such
that
$$
\sum_i \overline{\HH}_h^s(A_i) \le \sum_i \overline{\HH}^s(A_i) \le \HH^{s}(A)  +\epsilon.
$$
Thereby, there exists a sequence of intervals  $\{U_{i,
j}\}$ of length $h$ covering $A$ such that for each $i$, we have
$\{U_{i, j}\}$ is a $h$-cover of $A_i$ and
$$
\#\big\{U_{i, j} \big\} h^s  \le \HH^{s}(A)  +\epsilon.
$$
 Let $[a, b]$ be any interval of  $\{U_{i, j}\}$.
  Enclose all the  points of the set $A\times B_n$ lying between
   tine $x=a$ and $x=b$ in the set of squares, with sides on
    these lines, whose projections on the $y$-axis are the intervals of $J(h)$. Also,
    construct a similar sets of squares corresponding to each interval of  $\{U_{i, j}\}$  and denote the sets of squares  corresponding to the interval $[a, b]$ by $C(a, b)$. Since $\# C(a, b)$ does
     not exceed $\# J(h)$ and each square can be inscribed in a ball of
     diameter $h'=\sqrt{2} h$, we obtain
$$
N_{h'/2}(A\times B_n) \le \# J(h)\;  \# \{U_{i, j}\}.
$$
Thus
\begin{eqnarray*}
\overline{\HH}^{s+t}_{h'/2}(A\times B_n) &\le& 2 nh^{-t} \HH^t(B) (\sqrt{2} h)^{s+t}    \# \{U_{i, j}\}\\
&\le &  2^{\frac{1}{2} (s+t +2)}  n \HH^t(B)    \sum_{i, j} h^s\\
&\le &  2^{\frac{1}{2} (s+t +2)}  n \HH^t(B)
(\HH^s(A)+\epsilon),
\end{eqnarray*}
from which the equation \eqref{produit} follows.
\end{proof}

\section{Example} \label{example}

In general the inequalities in \eqref{dim-times} and
\eqref{mesure-times} may be strict. In this section, we will
construct two sets $A$ and $B$ such that
 $$\dim_{MB} A + \dim_{MB} B < \dim_{MB} (A\times B).$$ Before construction of these sets
we give the following useful lemma.
\begin{lemma} \label{projection}
Let $\psi : E\subset \R^2 \to F\subset \R$ be a surjective mapping such that, for $x, y \in E$,
$$
|\psi(x) - \psi(y) | \le c |x-y|,
$$
for a constant $c$. Then, for $t \ge 0$, $$ \HH^t(F) \le c^t
\HH^t(E).$$
\end{lemma}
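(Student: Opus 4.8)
The plan is to first prove the corresponding inequality at the level of the premeasure, namely $\overline{\HH}^t(\psi(E')) \le c^t \overline{\HH}^t(E')$ for every subset $E' \subseteq E$, and then to promote it to the measure $\HH^t$ through the defining infimum over countable covers. The crux is a comparison of covering numbers: a Lipschitz map contracts distances by at most the factor $c$, so an efficient cover of $E'$ by small balls pushes forward to a cover of $\psi(E')$ by balls whose radii are multiplied by $c$.

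Concretely, I would fix $E' \subseteq E$ and $r > 0$, and take a family $\big(B(x_i, r)\big)_{i \in I}$ of closed balls realizing $N_r(E')$, so that $x_i \in E'$ and $E' \subseteq \bigcup_i B(x_i, r)$. For each $i$ and each $z \in B(x_i, r) \cap E'$ the Lipschitz hypothesis gives $|\psi(z) - \psi(x_i)| \le c|z - x_i| \le cr$, whence $\psi\big(B(x_i,r) \cap E'\big) \subseteq B\big(\psi(x_i), cr\big)$. Since $\psi(x_i) \in \psi(E')$, the family $\big(B(\psi(x_i), cr)\big)_{i\in I}$ is an admissible cover of $\psi(E')$ by balls of radius $cr$ centered in $\psi(E')$, and therefore $N_{cr}(\psi(E')) \le N_r(E')$. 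Multiplying by $(2cr)^t = c^t (2r)^t$ yields $\overline{\HH}^t_{cr}(\psi(E')) \le c^t \overline{\HH}^t_r(E')$; letting $r \to 0$ (so that $cr \to 0$ as well) and passing to the lower limit gives $\overline{\HH}^t(\psi(E')) \le c^t \overline{\HH}^t(E')$.

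To conclude, I would start from an arbitrary countable cover $E \subseteq \bigcup_i E_i$. Then $F = \psi(E) \subseteq \bigcup_i \psi(E_i)$, and since the restriction $\psi|_{E_i}$ remains Lipschitz with the same constant $c$ and surjects onto $\psi(E_i)$, the premeasure estimate applies to each piece: $\overline{\HH}^t(\psi(E_i)) \le c^t \overline{\HH}^t(E_i)$. Using $\{\psi(E_i)\}$ as a cover of $F$ in the definition of $\HH^t$, we obtain $\HH^t(F) \le \sum_i \overline{\HH}^t(\psi(E_i)) \le c^t \sum_i \overline{\HH}^t(E_i)$; taking the infimum over all covers $\{E_i\}$ of $E$ then delivers $\HH^t(F) \le c^t \HH^t(E)$. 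The one point requiring care is the passage to the lower limit in the premeasure step: because the Lipschitz constant rescales the covering radius from $r$ to $cr$, one must verify that the liminf defining $\overline{\HH}^t(\psi(E'))$ is taken along radii that still exhaust a neighbourhood of $0$, which is immediate since $cr \to 0$ as $r \to 0$. The remaining verifications (nonemptiness of the image sets and admissibility of the chosen centres) are routine.
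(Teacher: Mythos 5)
Your proposal is correct and follows essentially the same route as the paper: push forward a ball cover of each piece $E_i$ to a ball cover of $\psi(E_i)$ with radii scaled by $c$, deduce $N_{cr}(\psi(E_i))(2cr)^t\le c^t N_r(E_i)(2r)^t$ and hence the premeasure inequality, then pass to $\HH^t$ via the infimum over covers. Your write-up is in fact more careful than the paper's on the two points it glosses over (the admissibility of the centres $\psi(x_i)$ and the reparametrisation $r\mapsto cr$ in the liminf).
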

\begin{proof}
Let $E_i \subset E$  and $F_i$ be the set  such that $\psi(E_i) = F_i$.  It is clear that for any covering  of $E_i$ by a balls with radius $\delta$ we can construct a covering of $F_i$ by a balls with radius $(c\delta)$. Therefore, for $t\ge 0$,
$$
N_{c\delta}(F_i) (2c\delta)^t \le c^t N_{\delta}(E_i) (2 \delta)^t.
$$
Thus
$$
\overline{\HH}^t(F_i)  \le c^t \overline{\HH}^t(E_i).
$$
Now, if $E\subset \bigcup_i E_i$ with  $E_i\subset E$ and let $\{F_i\}$ be the sets such that $\psi(E_i) =F_i$. Then
$$
\HH^t(F) \le \sum_{i}\overline{\HH}^t(F_i) \le c^t \sum_i \overline{\HH}^t(E_i).
$$
Since $\{E_i\}$ is an arbitrarily covering  of $E$ we get the result.
\end{proof}
  Let $\{t_j\}$ be a decreasing sequence of numbers with $\displaystyle\lim_{j\to+\infty} t_j = 0$ and let $\{m_j\}$ be a increasing sequence of integers. We can Choose $m_0= 0$ and $\{m_j\}_{j\ge 1}$ rapidly enough to ensue that, for all $j\ge 1$,
\begin{equation}\label{contrainte}
\sum_{k=0}^{j-1} m_{2k+1} - m_{2k} \le t_j m_{2j} \qquad \text{and}\qquad \sum_{k=1}^{j} m_{2k} - m_{2k-1} \le t_j m_{2j+1}.
\end{equation}
Consider the set $A\subset [0,1]$ such that, if $r$ is odd and $m_j
+1\le r\le m_{j+1}$ then the $r$-th decimal place is zero, i.e., $A$
is the set of $x$ such that
$$
x= 0,x_1\ldots x_{m_1}\underbrace{0 \;\; \ldots \ldots \;\; 0}_{(m_2-m_1) times}x_{m_2+1}\ldots x_{m_3}\underbrace{0\;\; \ldots \ldots \;\; 0}_{(m_4-m_3) times }\ldots
$$
where $x_i\in \{0, 1, \ldots, 9\}$. Similarly take the set $B\subset
[0,1]$ such that, if $r$ is even and $m_j +1\le r\le m_{j+1}$ then
the $r$th decimal place is zero, i.e.,   $B$ is the set of $x$ such
that
$$
x= 0,\underbrace{0 \;\; \ldots\ldots \;\; 0}_{m_1 times} \; x_{m_1+1}\ldots x_{m_2} \underbrace{0 \;\; \ldots\ldots \;\; 0}_{(m_3-m_2) times} \; x_{m_3+1}\ldots x_{m_4}\ldots
$$
where $x_i\in \{0, 1, \ldots, 9\}$. It is clear that we can cover $A$
by $10^k$ intervals of length $10^{-m_{2j}}$ where
$$
k  = (m_1-m_0) + (m_3-m_2)+\cdots + (m_{2j-1} - m_{2j-2}),
$$
it follows from \eqref{contrainte} that, if $t>0$ then
$$\HH^t(A) \le \overline{\HH}^t(A) =0.$$
As a consequence, we prove $\dim_{MB} A =0$ and similarly we have
$\dim_{MB} B =0$. Now let $\psi$ denote orthogonal projection from
the plane onto the line $L : y = x$. Then $\psi(x, y)$  is the point
of $L$ at distance
$$\sqrt{2}(x + y)$$ from the origin. Take $u\in [0,1]$ we may find
two number $x \in A$ and $y \in B$ such that $u = x + y$, indeed
 some of the decimal digits of $u$ are provided by $x$, the rest by $y$. Thus $\psi(A\times B)$ is a subinterval
 of $L$ of length $\sqrt{2}$. Using the fact that orthogonal projection does not increase distances and so, by  Lemma \ref{projection}, does not
 increase Hewitt-Stromberg  measures,
 \begin{eqnarray*}
\HH^1(A\times B) &\ge&  \HH^1\Big(\psi(A\times B)\Big) \ge    {\mathcal
H}^1\Big(\psi(A\times B)\Big) \\
&=&  {\mathcal L}\Big(\psi(A\times B)\Big) =\sqrt{2}.
 \end{eqnarray*}
 where ${\mathcal L}$ is the Lebesgue measure on $\R$. This imply
 that $$\dim_{MB}(A\times B) \ge 1 > \dim_{MB}(A)+\dim_{MB}(B).$$

\end{document}